\numberwithin{equation}{section}
\newtheorem{thm}{Theorem}[section]
\newtheorem{lm}[thm]{Lemma}
\theoremstyle{remark}
\newtheorem{rem}[thm]{Remark}
\newtheorem*{rem*}{Remark}
\theoremstyle{definition}
\newtheorem*{df*}{Definition}
\newenvironment{entry}
{\begin{list}{X}%
  {%
      \setlength{\labelwidth}{55pt}%
      \setlength{\leftmargin}{\labelwidth}
      \addtolength{\leftmargin}{\labelsep}%
      \setlength{\itemsep}{.4pc}
   }%
}%
{\end{list}}
\newcounter{vremennyj}
\newcommand\cond[1]{\setcounter{vremennyj}{\theenumi}\setcounter{enumi}{#1}\labelenumi\setcounter{enumi}{\thevremennyj}}
\newcommand{\cz}{Cald\-er\-\'{o}n--Zygmund\ }
\newcommand{\la}{\lambda}
\newcommand{\T}{\mathbb{T}}
\newcommand{\f}{\varphi}
\newcommand{\e}{\varepsilon}
\newcommand{\C}{\mathbb{C}}
\newcommand{\R}{\mathbb{R}}
\newcommand{\Z}{\mathbb{Z}}
\newcommand{\N}{\mathbb{N}}
\newcommand{\E}{\mathbb{E}}
\newcommand{\1}{\mathbf{1}}
\newcommand{\La}{\langle }
\newcommand{\Ra}{\rangle }
\newcommand{\lnm}{\!\left\bracevert\!}
\newcommand{\rnm}{\!\right\bracevert\!\!}
\newcommand{\Om}{\Omega}
\newcommand{\om}{\omega}
\newcommand{\cH}{\mathcal{H}}
\newcommand{\cL}{\mathcal{L}}
\newcommand{\cK}{\mathcal{K}}
\newcommand{\cD}{\mathcal{D}}
\newcommand{\cX}{\mathcal{X}}
\newcommand{\cY}{\mathcal{Y}}
\newcommand{\bP}{\mathbb{P}}
\newcommand{\bk}{\mathbf{k}}
\newcommand{\bS}{\mathbf{S}}
\newcommand{\fb}{\mathbf{\dot f}}
\newcommand{\fdot}{\,\cdot\,}
\newcommand{\ci}[1]{_{ {}_{\scriptstyle #1}}}
\newcommand{\ti}[1]{_{\scriptstyle \text{\rm #1}}}
\newcommand{\ut}[1]{^{\scriptstyle \text{\rm #1}}}
\newcommand{\wt}{\widetilde}
\begin{document}

\title
{$H^1$ and dyadic $H^1$}

\author{Sergei Treil}
\dedicatory{To Victor Petrovich Khavin}
\thanks{This research is  partially supported by the NSF grant DMS-0800876.
}
\address{Department of Mathematics, Brown University, 151 Thayer 
Str./Box 1917,      
 Providence, RI  02912, USA }
\email{treil@math.brown.edu}
\urladdr{http://www.math.brown.edu/\~{}treil}

\subjclass[2000]{Primary 42B30, 42B20; Secondary 42B25}
\keywords{$H^1$, dyadic $H^1$, dyadic BMO, random dyadic lattice, square function, Calder\'{o}n--Zygmund opeator} 
\date{}

\begin{abstract} 
In this paper we give a simple proof of the fact that the average over all dyadic lattices of the dyadic $H^1$-norm of a function gives an equivalent $H^1$-norm. The proof we present works for both one-parameter and multi-parameter Hardy spaces. 

The results of such type are known; cf.~\cite{Davis-Rearrangements-1980} for the one-parameter case. Also, by duality, such results are equivalent to the ``BMO from dyadic BMO'' statements proved in 
\cite{Garn-Jones-BMO-dyadic-1982} (one parameter case) and \cite{Pipher-Ward-2008} (two-parameter case). 

While the paper generalizes these results to the multi-parameter setting, this is not its main goal. The purpose of the paper is to present an approach leading to a simple proof, which works in both one-parameter and multi-parameter cases. 

The main idea of treating square function as a \cz operator is  a commonplace in harmonic analysis; the main observation, on which  the paper is based, is that one can treat the random dyadic square function this way. 
After that, all is proved by using the standard and well-known results about \cz operators in the Hilbert-space-valued setting. 

As an added bonus, we get a simple proof of the (equivalent by duality) inclusions $\text{BMO}\subset \text{BMO}\ti d$, $H^1\ti d \subset H^1$ in the multi-parameter case. Note, that unlike the one-parameter case, the inclusions in the general situation are far from trivial. 

\end{abstract}

\maketitle

\setcounter{tocdepth}{1}
\tableofcontents

\setcounter{section}{-1}

\section*{Notation}

\begin{entry}

\item[$\cH\otimes \cK$] tensor product of Hilbert spaces $\cH$ and $\cK$; we consider its endowed with the ``canonical'' norm, which makes $\cH\otimes \cK$ a Hilbert space.  If one of the spaces is a function space, for example if $\cH=L^2$, then $\cH\otimes \cK$ can be interpreted as $L^2$ with values in $\cK$. 

\item[$L^p(X; \cH)$] $L^p$-space of functions on $X$ with values in $\cH$; $X$ is usually $\R^N$ with the Lebesgue measure. Symbols $X$ and/or $\cH$ can be omitted if they are clear from the context. 

\item[$dx$] Lebesgue measure in $\R^N$ if $x\in \R^N$. 

\item[$\cD$] Dyadic Lattice in $\R^N$, see below. 

\item[$H^1$, $H^1_{\cD}$] real variable Hardy space on $\R^N$ and its dyadic counterpart, associated to a dyadic lattice $\cD$. We will also consider Hilbert-space-valued spaces, and the notation like $H^1(X; \cH)$ will be used. 

\item[$\ell(Q)$, $|Q|$] sidelength and volume of a cube $Q\subset \R^N$. 

\item[$\E\xi$]  expectation of a random variable $\xi$, $\E\xi = \int_\Omega \xi(\omega) dP(\om)$; sometimes, to distinguish the random variable in a formula, the notation $\E\xi(\om)$ or $\E_\om\xi(\om)$ will be used instead of $\E\xi$. 

\end{entry}

\subsection*{Cubes and dyadic lattices}
Throughout the paper we will speak a lot about dyadic cubes and dyadic
lattices, so let us first fix some terminology. A {\em cube} in
$\R^N$ is an object obtained from the {\em standard cube} $[0,1)^N$ by
dilations and shifts.

For a cube $Q$ we denote by $\ell(Q)$ its {\em size}, i.~e. the length
of its side. Given a cube $Q$ one can split it by dividing each side in halves into $2^N$ cubes $Q_k$
of size $\ell(Q)/2$: we will call such cubes $Q_k$ the {\em children
of $Q$}.

For a cube $Q$ and $\la >0$ we denote by $\la Q$ the cube $Q$ dilated
$\la$ times with respect to its center.

Now, let us define the {\em standard dyadic lattice} $\cD_0$: for
each $k\in\Z$ let us consider the cube $[0, 2^k)^N$ and all its shifts
by elements of $\R^N$ with coordinates of form $j\cdot 2^k$,
$j\in\Z$. The collection of all such cubes (union over all $k$) is
called the {\em standard dyadic lattice}.

A {\em dyadic lattice} $\cD$ is just a shift of the standard dyadic
lattice $\cD_0$.
A collection of all cubes from a dyadic lattice $\cD$ of a fixed size
$2^k$ is called a {\em layer} of the lattice.

\subsection*{Random dyadic lattice}

Our random lattice will contain the dyadic cubes  of standard size
$2^k$ ($k\in\Z$), but will be ``randomly shifted'' with respect to the
standard dyadic lattice $\cD_0$. The simplest idea would be to
pick up a random variable
$x$ uniformly distributed over $\R^N$ and to define the random
lattice as
$x+\cD_0$. This idea works for the torus $\T^N$, but unfortunately, there exists no such random variable $x$ in $\R^N$, so we have to
act in a little bit more sophisticated way.

Let us construct a random lattice of dyadic intervals
on the real line $\R$, and then define a random lattice in $\R^N$ as
the product of the lattices of intervals.

Let $\Omega_1$ be some probability
space and let
$x(\omega)$ be a random  variable
uniformly distributed over the interval $[0,1)$.

Let $\xi_j(\om)$ be
random variables satisfying
$\bP\{\xi_j=+1\}=\bP\{\xi_j=-1\}=1/2$. Assume also that
$x(\om),\xi_j(\om)$, $j\in\N$ are
independent. Define the random lattice $\cD(\om)$ as follows:

\begin{enumerate}
\item
We require that  $I_0(\om):=[x(\om)-1,x(\om)]\in\cD(\om)$; this  gives us
all
intervals in $\cD(\om)$ of length $2^k$, $k\le0$.

\item To determine the rest of the intervals, it is enough to know dyadic intervals $I_k(\om)\supset I_0(\om)$, of length $2^k$, $k\ge 0$. 
The intervals $I_k(\om)$  are
determined inductively: if $I_{k-1}(\om)\in\cD(\om)$ is already known (and thus all intervals of length $2^{k-1}$ in $\cD(\om)$), then 
\begin{itemize}
	\item $I_{k}(\om)$ is the union of $I_{k-1}(\om)$ and its right neighbor if 
$\xi_k(\om)=+1$,
and 
  \item $I_k(\om)$ is the union of $I_{k-1}(\om)$ and its left neighbor if $\xi_k(\om)=-1$. 
\end{itemize}
\end{enumerate}

To get a random dyadic lattice in $\R^N$ we just take  $N$
independent random dyadic lattices $\cD_1, \cD_2, \ldots, \cD_N$ in $\R$ and consider all cubes $Q=I_1\times I_2\times \ldots \times I_N$, $I_k\in \cD_k$.

\section{Introduction and main results}

\subsection{One parameter case}
\label{s0.1}
Let $H^1=H^1(\R^N)$ be the usual \emph{real variable} Hardy space on $\R^N$, and let $H^1\ci\cD$ be its dyadic counterpart, defined as follows. 

Consider a dyadic lattice $\cD$ in $\R^N$. 
Let $E_k=E_k^\cD$ be the averaging operator over cubes $Q\in \cD$ of size $2^k$, 
$
E_k f(x) = |Q|^{-1}\int_Q f
$, 
where $Q$ is the cube in $\cD$ of size $2^k$ containing $x$. If $Q\in \cD$ is a cube of size $2^k$ define $E\ci Q$ by $E\ci Q f = (|Q|^{-1}\int_Q f ) \1\ci Q = \1\ci Q (E_k f)$. 

Define the martingale differences $\Delta_k = \Delta_k^\cD := E_{k-1}^\cD - E_k^\cD$, and again for a cube $Q\in \cD$ of size $2^k$ define $\Delta\ci Q$ by $\Delta\ci Q f = \1\ci Q (E_k f) $. 

Define the dyadic square function $S=S\ci\cD$ by
\begin{equation}
\label{0.1}
(S\ci\cD f)(x) = \left(\sum_{k\in \Z} \left| \Delta_k^\cD f(x) \right|^2 \right)^{1/2} 
=\left(\sum_{Q\in \cD, Q\ni x} \left| \Delta_Q f(x) \right|^2 \right)^{1/2}. 
\end{equation}

One can also consider a slightly different square function $\wt Sf = \wt S\ci\cD f$
\begin{equation}
\label{0.2}
(\wt S\ci\cD f)(x) = \left(\sum_{k\in \Z} \left( E_k^\cD \left| \Delta_k^\cD f \right|^2  \right)(x) \right)^{1/2}
= \left(\sum_{Q\in \cD, \, Q\ni x} \left( E_Q^\cD \left| \Delta_Q^\cD f \right|^2 \right)(x)\right)^{1/2};
\end{equation}
in other words, each term in the sum on the right equals the average of $|\Delta\ci Q f|^2$ if $x\in Q$ and $0$ otherwise. 

It is not hard to show and will be explained later that $C^{-1} \| Sf \|_1 \le \| \wt Sf \|_1 \le C\| Sf \|_1 $, where the constant $C$ depends only on the dimension $N$. 

\begin{df*}
A function $f\in L^1\ti{loc}$ is in the dyadic Hardy space $H^1\ci\cD$ (with respect to the dyadic lattice $\cD$) if $\|f\|_{H^1_\cD}:= \| S\ci\cD f\|_1<\infty$
\end{df*}

One can use the square function $\wt S_\cD f$ here and get an equivalent norm. 

And now one of the main results of the paper. Let $\cD(\om)$, $\om\in \Omega$ be the random dyadic lattice, as described above, and let us recall that  $\E=\E_\om$ denotes the expectation (average with respect to $\om$) 

\begin{thm}
\label{t0.1}
A function $f\in L^1\ti{loc}(\R^n)$ belongs to $H^1$ if and only if 
$$
\int_{\R^N} \left[ \E (| S_{\cD(\omega)} f (x) |^2 )\right]^{1/2} dx <\infty.
$$
Moreover, the latter quantity gives an equivalent norm on $H^1$. 

The same result is true with the square function $\wt S_{\cD(\om)}$ in place of $S_{\cD(\om)}$. 
\end{thm}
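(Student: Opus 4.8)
The plan is to realize the quantity in the theorem as the $L^1(\R^N;\cH)$-norm of a \emph{single} Hilbert-space-valued operator applied to $f$, and then to show that this operator is a \cz operator. Set $\cH:=L^2(\Om)\otimes\ell^2(\Z)$ and define
\[
Tf(x):=\bigl(\Delta_k^{\cD(\om)}f(x)\bigr)_{k\in\Z,\ \om\in\Om}\in\cH .
\]
Then $\|Tf(x)\|_\cH^2=\E\sum_{k}|\Delta_k^{\cD(\om)}f(x)|^2=\E\,|S_{\cD(\om)}f(x)|^2$, so $\|Tf(x)\|_\cH=[\E|S_{\cD(\om)}f(x)|^2]^{1/2}$ and the quantity in the theorem is exactly $\|Tf\|_{L^1(\R^N;\cH)}$. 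The theorem thus reduces to the norm equivalence $\|Tf\|_{L^1(\cH)}\approx\|f\|_{H^1}$. First I would record that $T$ is an isometry $L^2(\R^N)\to L^2(\R^N;\cH)$: for each fixed $\om$ the martingale differences are orthogonal and reproduce $f$, so $\int_{\R^N}|S_{\cD(\om)}f|^2=\|f\|_2^2$, and integrating in $\om$ by Fubini gives $\|Tf\|_{L^2(\cH)}=\|f\|_2$. In particular $T^*T=I$.

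The core step is to exhibit $T$ as a \cz operator with an $\cH$-valued kernel $K(x,y)$, whose $(\om,k)$-component is the kernel $|Q'|^{-1}\1_{Q'}(y)-|Q|^{-1}\1_{Q}(y)$ of $\Delta_k^{\cD(\om)}=E_{k-1}-E_k$, where $Q',Q\in\cD(\om)$ are the cubes of sizes $2^{k-1},2^{k}$ containing $x$. The size bound $\|K(x,y)\|_\cH\lesssim|x-y|^{-N}$ is routine: for fixed $x,y$ only the scales $2^{k}\gtrsim|x-y|$ contribute, and there $|K_k^\om(x,y)|\lesssim 2^{-kN}$, so the sum over scales (inside the $\cH$-norm) is $\lesssim|x-y|^{-2N}$.

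The main obstacle, and the one genuinely new point, is the smoothness (H\"ormander) estimate. For a single fixed lattice the kernel of a martingale difference is discontinuous, so no smoothness can hold; the gain comes entirely from the averaging in $\om$. For $|y-y'|\ll|x-y|$ and a fixed scale $k$, the difference $K_k^\om(x,y)-K_k^\om(x,y')$ is nonzero only for those $\om$ for which a dyadic wall of $\cD(\om)$ at scale $2^k$ falls between $y$ and $y'$; by construction of the random lattice the probability of this event is $\lesssim|y-y'|/2^k$. Summing the contributions over the relevant scales $2^k\gtrsim|x-y|$ produces the extra factor $|y-y'|/|x-y|$, i.e. a H\"ormander (in fact Lipschitz) condition for $K$ as an $\cH$-valued kernel. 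This is exactly where the randomization is exploited, and it is the step I expect to require the most care.

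Granting the \cz estimates, both inequalities follow from standard vector-valued theory. For the upper bound, $T\colon H^1\to H^1(\cH)\hookrightarrow L^1(\cH)$ is bounded, so $\|Tf\|_{L^1(\cH)}\lesssim\|f\|_{H^1}$. For the lower bound I would use $H^1$--$\mathrm{BMO}$ duality together with $T^*T=I$: writing $\La f,g\Ra=\La Tf,Tg\Ra$, and using that $T$ maps $\mathrm{BMO}\to\mathrm{BMO}(\cH)$ (the $L^\infty\to\mathrm{BMO}$ bound for \cz operators), one controls the pairing $\La Tf,Tg\Ra$ by the tent-space $T^1$--$T^\infty$ (Carleson-measure) duality by $\|Tf\|_{L^1(\cH)}\,\|g\|_{\mathrm{BMO}}$, since the $\mathrm{BMO}(\cH)$-function $Tg$ generates a Carleson measure with constant $\lesssim\|g\|_{\mathrm{BMO}}$. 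Taking the supremum over $\|g\|_{\mathrm{BMO}}\le1$ yields $\|f\|_{H^1}\lesssim\|Tf\|_{L^1(\cH)}$. Finally, the variant with $\wt S_{\cD(\om)}$ is handled identically, the only change being in the components of $K(x,y)$; alternatively it follows from the equivalence $\|Sf\|_1\approx\|\wt Sf\|_1$ already noted above.
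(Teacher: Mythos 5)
Your vectorization and your upper bound are exactly the paper's proof: your $T$ is the paper's operator $\bS$, the size estimate is obtained the same way, and the kernel smoothness comes from the same observation that a wall of $\cD(\om)$ at scale $2^k$ falls between $y$ and $y'$ with probability $\lesssim |y-y'|2^{-k}$. One quantitative claim is wrong, however: the kernel is \emph{not} Lipschitz in the $\cH$-norm. On the bad event the two kernel components differ by their full size $\sim 2^{-kN}$, so the $L^2(\Om)$-average picks up only the \emph{square root} of the probability,
\begin{equation*}
\left(\E\left|K_k^{\om}(x,y)-K_k^{\om}(x,y')\right|^2\right)^{1/2}\ \lesssim\ 2^{-kN}\left(|y-y'|\,2^{-k}\right)^{1/2},
\end{equation*}
and summing $2^{-2kN-k}|y-y'|$ over $2^k\gtrsim|x-y|$ gives $\|K(x,y)-K(x,y')\|_{\cH}\lesssim |y-y'|^{1/2}|x-y|^{-N-1/2}$, i.e.\ H\"older regularity with exponent $\delta=1/2$ (exactly what the paper obtains), not $\delta=1$. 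This is harmless, since any $\delta>0$ suffices for the H\"ormander condition and for the vector-valued $H^1\to L^1$ theorem you invoke, but the words ``in fact Lipschitz'' must go. Similarly, the intermediate claim $T\colon H^1\to H^1(\cH)$ is an unjustified strengthening (it requires the cancellation $T^*\1=0$, which happens to hold here but which you never verify); what the cited vector-valued theory gives, and all you actually use, is $H^1\to L^1(\cH)$.

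The genuine divergence from the paper, and the genuine gap, is your lower bound. The paper's argument is a triviality: for every \emph{fixed} $\om$ one has $\|f\|_{H^1}\le C\|S_{\cD(\om)}f\|_1$ with $C$ independent of $\om$ (by $H^1$--BMO duality and the elementary inclusion $\text{BMO}\subset\text{BMO}_{\cD(\om)}$), and then Tonelli plus Cauchy--Schwarz in $\om$ give
\begin{equation*}
\|f\|_{H^1}\le C\,\E\|S_{\cD(\om)}f\|_1=C\int_{\R^N}\E\left(|S_{\cD(\om)}f(x)|\right)dx\le C\int_{\R^N}\left[\E\left(|S_{\cD(\om)}f(x)|^2\right)\right]^{1/2}dx.
\end{equation*}
Your route via $T^*T=I$ and tent-space duality does not run as written. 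First, you cannot reduce $|\La Tf,Tg\Ra|\lesssim\|Tf\|_{L^1(\cH)}\|g\|_{\text{BMO}}$ to a pointwise Cauchy--Schwarz: $\sup_x\|Tg(x)\|_\cH$ is \emph{not} controlled by $\|g\|_{\text{BMO}}$ (for $g=\log|x|$ the expected dyadic square function blows up like $(\log(1/|x|))^{1/2}$ near the origin), so a genuine Carleson-embedding/stopping-time argument is required, and you do not specify it. Second, the Carleson condition that makes such an argument work --- $\sum_{Q\in\cD(\om),\,Q\subset R}\|\Delta_Q g\|_2^2\le C\|g\|_{\text{BMO}}^2|R|$ uniformly in $\om$ and $R$ --- is not a consequence of ``$Tg\in\text{BMO}(\cH)$'' (that membership itself needs $T\1=0$, again unverified, and abstract $\text{BMO}(\cH)$ membership of a function whose components you label by scale does not imply a uniform Carleson bound); it follows from $g\in\text{BMO}\subset\text{BMO}_{\cD(\om)}$ for each fixed $\om$, which is precisely the elementary fact the paper exploits. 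Once repaired --- fix $\om$, apply the dyadic $H^1$--BMO duality, take expectations, apply Cauchy--Schwarz --- your argument collapses to the paper's shorter one, with the tent-space machinery adding nothing. Finally, your fallback for $\wt S_{\cD(\om)}$, deducing it from $\|Sf\|_1\approx\|\wt Sf\|_1$, also fails: that equivalence holds for each fixed lattice at the level of $L^1(dx)$-norms and cannot be moved inside $[\E(\cdot)^2]^{1/2}$; your primary suggestion (run the same \cz argument for the modified kernel) is the correct one and is what the paper does.
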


\subsection{Multi-parameter case}
\label{s0.2}

The above results can be generalized to the case of multi-parameter Hardy spaces. Let $X_k =\R^{N_k}$, $k=1, 2, \ldots, n$ and let $H^1(X_1\times X_2\times \cdots \times X_n)$ be the $n$-parameter Hardy space, see Section \ref{s2.1} below for the definition. 

Define the dyadic Hardy space as follows. Let $\cD_k$ be a dyadic lattice on $X_k$, $k=1, 2, \ldots, n$ and let $\cD = \cD_1\times \cD_2\times\ldots\times \cD_n$ be the product dyadic lattice on $X=X_1\times X_2\times \ldots \times X_n$; the elements on $\cD$ are the ``rectangles'' (parallelepipeds) $R=Q_1\times Q_2\times\ldots \times Q_n$, $Q_k\in \cD_k$. 

For a multiindex $\bk=(k_1, k_2, \ldots k_n)$ define on $X=X_1\times X_2\times \cdots \times X_n$ the average $E_\bk:= E^1_{k_1}E^2_{k_2}\ldots E^n_{k_n}$ and the  martingale differences $\Delta_\bk := \Delta^1_{k_1} \Delta^2_{k_2} \ldots \Delta^n_{k_n}$, where $E^j_{k_j}$ and $\Delta^j_{k_j}$ are the ``one variable''   averages $E_{k_j}$ martingale difference $\Delta_{k_j}$ as defined above by 
in Section \ref{s0.1},   
taken in the variable $x_j\in X_j$. 

For a ``rectangle'' $R=Q_1\times Q_2\times\ldots \times Q_n$, $Q_k\in \cD_k$ define $\Delta_R := \Delta^1_{Q_1}\Delta^2_{Q_2} \ldots\Delta^n_{Q_n}$, where again $\Delta^j_{Q_j}$ is the operator $\Delta_{Q_j}$ defined by \eqref{0.2}
taken in the variable $x_j$. 

Define the multi-parameter square function $S=S_\cD$ by 
\begin{equation}
\label{0.3}
(S\ci\cD f)(x) = \left(\sum_{\bk\in \Z^n} \left| \Delta_\bk f(x) \right|^2 \right)^{1/2} 
=\left(\sum_{R\in \cD, R\ni x} \left| \Delta_R f(x) \right|^2 \right)^{1/2}. 
\end{equation}
One can also define the square function $\wt S =\wt S_\cD$
\begin{equation}
\wt S f (x) 	:= \left(\sum_{k\in \Z} \left( E_k \left| \Delta_k f \right|^2 \right)(x)\right)^{1/2} = 
\left(\sum_{R\in \cD, R\ni x} \left( E_R \left| \Delta_R f \right|^2 \right)(x) \right)^{1/2}
\end{equation}
The definitions look very similar to \eqref{0.1}, \eqref{0.2}, only here the sums in the right hand side is taken over all ``rectangles'' $R$, while in \eqref{0.1}, \eqref{0.2} they are taken over all cubes.

We use the same notation for the one-parameter and multi-parameter square function, but since we will treat these cases in different sections, we hope to avoid the confusion. 

\begin{df*}
Let $\cD = \cD_1\times \cD_2\times\ldots\times \cD_n$ be a product dyadic lattice on $X=X_1\times X_2\times \ldots \times X_n$. We say that a function $f\in L^1\ti{loc}(X)$ belongs to the dyadic Hardy space $H^1_\cD$ if $\|f\|_{H^1_\cD}:= \| S\ci\cD f\|_1<\infty$
\end{df*}

Let now $\cD_1(\om), \cD_2(\om), \ldots, \cD_n(\om)$ be the independent random  dyadic lattices on the spaces $X_1, X_2, \ldots, X_n$ respectively, and let $\cD(\om)= \cD_1(\om)\times \cD_2(\om)\times \ldots\times \cD_n(\om)$ be the multi-parameter random dyadic lattice.

\begin{thm}
\label{t0.4}
A function $f\in L^1\ti{loc}(\R^n)$ belongs to $H^1(X_1\times X_2\times \cdots \times X_n)$ if and only if 
$$
\int_{\R^N} \left[ \E (| S_{\cD(\omega)} f (x) |^2 ) \right]^{1/2} dx <\infty;
$$
here $S_{\cD(\om)}$ is the multi-parameter random dyadic lattice defined above. Moreover, the latter quantity gives an equivalent norm on $H^1$. 

The same result is true with the square function $\wt S_{\cD(\om)}$ in place of $S_{\cD(\om)}$. 
\end{thm}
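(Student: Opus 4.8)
\emph{Reduction to a single Hilbert-space-valued operator.} The plan is to recognize the quantity in the theorem as the $L^1$-norm of one $\cH$-valued operator applied to $f$, and then to show that this operator is a multi-parameter (product) \cz operator which is, in addition, an isometry on $L^2$. I would set $\cH := L^2(\Om)\otimes \ell^2(\Z^n)$ and define
\begin{equation*}
\bS f(x) := \bigl\{ \Delta_\bk^{\cD(\om)} f(x) \bigr\}_{\bk\in\Z^n,\ \om\in\Om},
\end{equation*}
an $\cH$-valued function. By the definitions of $\cH$ and of $S_{\cD(\om)}$ one has $\|\bS f(x)\|_\cH^2 = \E\bigl(|S_{\cD(\om)} f(x)|^2\bigr)$, so the quantity in the theorem is exactly $\|\bS f\|_{L^1(X;\cH)}$, and the goal becomes the two-sided estimate $\|\bS f\|_{L^1(\cH)} \asymp \|f\|_{H^1}$. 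The first, elementary, observation is that $\bS$ is an isometry on $L^2$: for each fixed $\om$ the martingale differences $\Delta_\bk^{\cD(\om)}$ are mutually orthogonal projections with $\sum_\bk \Delta_\bk^{\cD(\om)} = I$, so $\|S_{\cD(\om)} f\|_2 = \|f\|_2$, and averaging in $\om$ by Fubini gives $\|\bS f\|_{L^2(\cH)} = \|f\|_2$. In particular $\bS^*\bS = I$.

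\emph{$\bS$ is a product \cz operator.} This is the heart of the matter and the one genuinely new point. The kernel of $\Delta_\bk^{\cD(\om)}$ factors as a product over the $n$ variables of one-dimensional martingale-difference kernels, each depending on the random shift $\om_j$ of the $j$-th lattice. For a single fixed lattice such a kernel is genuinely non-smooth (it jumps across the dyadic grid), but after taking the expectation over the independent uniform shifts the $\cH$-valued kernel $K(x,y)$ of $\bS$ becomes smooth. I would verify the product \cz estimates: the size bound $\|K(x,y)\|_\cH \lesssim \prod_{j=1}^n |x_j-y_j|^{-N_j}$ together with the corresponding H\"ormander smoothness in each variable separately. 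The mechanism is that averaging over the grid position replaces the discontinuous grid-dependent kernel by its average, which behaves like a smooth Littlewood--Paley kernel; this is precisely the ``treat the random dyadic square function as a \cz operator'' observation. I expect this step --- and in particular obtaining product smoothness uniformly in all $n$ variables --- to be the main obstacle.

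\emph{Upper bound.} Once $\bS$ is known to be a product \cz operator bounded on $L^2$, the inequality $\|\bS f\|_{L^1(\cH)} \lesssim \|f\|_{H^1}$ follows at once from the standard Hilbert-space-valued result that a multi-parameter \cz operator bounded on $L^2$ maps the product Hardy space $H^1$ into $L^1$.

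\emph{Lower bound.} For the reverse inequality I would compare $\bS$ with a fixed reference square function $G$ built from smooth Littlewood--Paley pieces and valued in a Hilbert space $\cK$, for which the characterization $\|f\|_{H^1}\asymp \|Gf\|_{L^1(\cK)}$ is part of the classical product-$H^1$ theory; like $\bS$, the operator $G$ is a product \cz isometry, $G^*G=I$. Using $\bS^*\bS = I$ one writes $Gf = (G\bS^*)\,\bS f$, where $A:=G\bS^*$ is again a product \cz operator, hence bounded $H^1(\cH)\to L^1(\cK)$. The remaining, more delicate, point is that the square-function image $\bS f$ already lies in $H^1(\cH)$ with $\|\bS f\|_{H^1(\cH)}\asymp \|\bS f\|_{L^1(\cH)}$; this should follow from the fact that martingale differences are orthogonal idempotents, so that applying the square function to $\bS f$ reproduces $\bS f$ itself up to the same \cz comparison, making $\bS f$ ``self-reproducing''. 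Combining these facts gives $\|f\|_{H^1}\asymp\|Gf\|_{L^1(\cK)}=\|A\,\bS f\|_{L^1(\cK)}\lesssim\|\bS f\|_{H^1(\cH)}\asymp\|\bS f\|_{L^1(\cH)}$, the desired lower bound. The statement for $\wt S_{\cD(\om)}$ then follows from the same argument together with the equivalence $\|Sf\|_1\asymp\|\wt Sf\|_1$ recorded earlier. Throughout, the only difference from the one-parameter Theorem \ref{t0.1} is that one-parameter \cz theory is replaced by its multi-parameter (product) counterpart, which is where the genuine additional difficulty of the $n$-parameter case resides.
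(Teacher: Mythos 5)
Your upper bound is essentially the paper's first route: the quantity in the theorem is $\|\bS f\|_{L^1(X;\cH)}$ for the vectorized operator $\bS$, and since $\bS=\bS_1\otimes\cdots\otimes\bS_n$ is a tensor product of one-parameter Hilbert-space-valued \cz operators, it falls into the (trivial case of the) multi-parameter \cz class, which maps product $H^1$ to $L^1$ (the paper cites Pipher's Theorem 2.2, and also gives a self-contained alternative via an iterated one-variable argument, Lemma \ref{l2.1}). One caveat: the conditions you propose to check --- a product size bound plus H\"{o}rmander smoothness in each variable separately --- are famously \emph{not} sufficient hypotheses for product \cz theory; what actually makes this step work is precisely the tensor-product structure (Journ\'{e}-type conditions are then automatic). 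Contrary to your expectation, this step is not the main obstacle of the multi-parameter case.

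The genuine gap is in your lower bound, in the claim $\|\bS f\|_{H^1(\cH)}\asymp\|\bS f\|_{L^1(\cH)}$. The ``self-reproducing'' heuristic does not survive randomization: applying the square function to the $\cH$-valued function $\bS f$ produces the cross terms $\Delta_{\bk'}^{\cD(\om')}\Delta_{\bk}^{\cD(\om)}f$ with $\om'\ne\om$, and for two \emph{different} lattices the martingale differences are not orthogonal idempotents of one another, so nothing collapses. The idempotent identity only works lattice-by-lattice with the \emph{same} lattice, and what it then gives is $S_\cD(\bS_\cD f)=S_\cD f$ pointwise, i.e.\ $\|\bS_\cD f\|_{H^1\ci{\cD}(X;\ell^2)}=\|S_\cD f\|_1$ --- membership of $\bS_\cD f$ in \emph{dyadic} $H^1\ci\cD$, not in $H^1$. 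Passing from $H^1\ci\cD$ to $H^1$ in the multi-parameter setting is exactly the inequality $\|g\|_{H^1}\le C\|S_\cD g\|_1$, which (unlike in one parameter, where it is a trivial consequence of $\mathrm{BMO}\subset\mathrm{BMO}\ci\cD$ and duality) is the genuinely non-trivial part of Theorem \ref{t0.4}; your argument therefore assumes the hard half of the theorem rather than proving it. The paper's proof avoids this trap by a reduction you do not use: since $\|f\|_{H^1}\le C\|S_{\cD(\om)}f\|_1$ for each \emph{fixed} $\om$, Tonelli plus Cauchy--Schwarz in $\om$ reduces the lower bound to this fixed-lattice estimate, which is then proved (Section \ref{s2.3}, Lemma \ref{l2.2}) by an iterative one-variable scheme: replace the Lusin square function by the dyadic one, one coordinate at a time, using in each variable only the one-parameter Hilbert-space-valued $H^1$--BMO duality and the trivial inclusion $\mathrm{BMO}\subset\mathrm{BMO}\ci\cD$. (A secondary soft spot in your argument: $G\bS^*$ is a composition of \cz operators, and such compositions need not themselves be \cz operators; this too would need justification, but it is moot given the main gap.)
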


\subsection{Some remarks}

\begin{rem}
It is a simple exercise to show that  in both one-parameter and multi-parameter cases
$$
\E (| S_{\cD(\omega)} f (x) |^2 =\lim_{Q\to \R^N} \frac1{|Q|} \int_Q |S_\cD f(x+u)|^2 du
$$  
where $\cD$ is a fixed dyadic lattice and the limits is taken over cubes (or parallelipipeds) $Q\subset \R^N$ ($Q\subset X$ in the multi-parameter case) centered at $0$ whose sidelengths tend to $\infty$. 
\end{rem}

\begin{rem}
Note that for every dyadic lattice $\cD$ one has $\|f\|_{H^1} \le C \| S_{\cD} f\|_1$, thus  by Tonelli   theorem and H\"{o}lder inequality we have for $p\le 2$
$$
\|f\|_{H^1} \le C \int_{\R^N} \left[ \E ( | S_{\cD(\omega)} f (x) |^p ) \right]^{1/p} dx 
\le C \int_{\R^N} \left[ \E (| S_{\cD(\omega)} f (x) |^2 ) \right]^{1/2} dx. 
$$
In the one-parameter case the estimate $\|f\|_{H^1} \le C \| S_{\cD} f\|_1$ is trivial and well known. Indeed, $H^1$-BMO duality ($(H^1)^*=\text{BMO}$, $(H^1_\cD)^*=\text{BMO}_\cD$, see the definitions below in Section \ref{s3.1}) and the trivial inclusion $\text{BMO}\subset\text{BMO}_\cD$ imply the $H^1_\cD \subset H^1$ with the desired estimates of the norms.  

Since $\text{BMO}\ne\text{BMO}\ci\cD$, one can conclude that th inclusion $H^1_\cD \subset H^1$ is proper. 

In the multi-parameter case the estimate $\|f\|_{H^1} \le C \| S_{\cD} f\|_1$is also known to specialists, but it is less trivial. In fact, the only place that the author is aware of, where this is proved is the Ph.D.~thesis of J.~Pipher; this proof is far from trivial and the calculations are quite tedious.  Below in Section  \ref{s2.3} we present a different, quite simple prove of this fact. This proof is based on one-parameter (Hilbert-space-valued) $H^1$-BMO theory. 
\end{rem}

\begin{rem}
Applying H\"{o}lder inequality to Theorem \ref{t0.1} (to Theorem \ref{t0.4} in the multi-parameter case) we get that for $p\le 2$
\begin{equation}
\label{0.5}
 \int_{\R^N} \left[ \E ( | S_{\cD(\omega)} f (x) |^p ) \right]^{1/p} dx 
\le  \int_{\R^N} \left[ \E (| S_{\cD(\omega)} f (x) |^2 ) \right]^{1/2} dx
\le C \|f\|_{H^1}
\end{equation}
If $p=1$ Tonelli Theorem and \eqref{0.5} implies that 
\begin{equation}
\label{0.6}
\E\|S\ci{\cD(\om)} f\|_1 \le C\|f\|_{H^1}.
\end{equation}
This was proved in one-parameter case in \cite{Davis-Rearrangements-1980}. Note, that by duality \eqref{0.6} is equivalent to ``BMO from dyadic BMO'' statements: if $f_\om$ $\om \in \Om$ is a measurable family of functions, $f_\om \in \text{BMO}_{\cD(\om)}$, $\|f_\om\|\ci{\text{BMO}_{\cD(\om)}}\le 1$, then for $f$ defined by $f(x) =\E_\om f_\om (x)$ one has $f\in \text{BMO}$, $\|f\|\ti{BMO} \le C$. 

This ``BMO from dyadic BMO'' result was proved directly in \cite{Garn-Jones-BMO-dyadic-1982}  (one-parameter case) and  in the recent paper \cite{Pipher-Ward-2008}  (the two-parameter case). 
\end{rem}

\section{Proof of Theorem 
\ref{t0.1}}
\label{s1}

\subsection{``Vectorization'' of the square function}
There is a standard way of making the nonlinear operator $S_\cD$ into a linear one by treating $S_\cD f$ as a vector-valued function. 

Namely, let us consider the space $L^1(\ell^2)$ consisting of functions on $\Z\times \R^N$ such that 
$$
\| f\|_{L^1(\ell^2)} := \int_{\R^N} \left( \sum_{k\in\Z} | f(k, x)|^2 \right)^{1/2} dx<\infty. 
$$

Define the vector-valued square function $\bS_\cD f$ by
$$
\bS_\cD f (k, x) := \Delta_k^\cD f (x), \qquad k\in \Z,\ x\in \R^N
$$
($\bS_\cD f$ is a function on $\Z\times \R^N$). We will treat this function as a function of the argument $x\in \R^N$ with values in $\ell^2=\ell^2(\Z)$. 

Clearly, $\lnm \bS_\cD f (\fdot, x)\rnm_{\ell^2} = S_\cD f(x)$, so $f\in H^1_\cD$ if and only if $\bS f \in L^1(\ell^2)$. Moreover,  $\| \bS_\cD f \|_{L^1(\ell^2)} = \|S_\cD f\|_1 = \| f\|_{H^1_\cD}$. 

Let now $\cD(\om)$ be the random dyadic lattice, and let $\Om, \bP$ be the corresponding probability space. Consider the space $\cL= L^1(\ell^2\otimes L^2(\Om, \bP))$, 
$$
\| f\|_{\cL} := \int_{\R^N} \left( \int_\Om \sum_{k\in\Z} | f(k,\om, x)|^2 d\bP(\om) \right)^{1/2} dx. 
$$
It is an $L^1$ space with values in the Hilbert space $\ell^2\otimes L^2(\Om, \bP)$; here again $f$ is treated as function of the argument $x\in \R^N$ with values in $\ell^2\otimes L^2(\Om, \bP)$. 

Define the vector-valued square function $\bS$ with values in $\ell^2\otimes L^2(\Om, \bP)$ by
\begin{equation}
\label{1.1}
\bS f(k, \om, x) = \bS_{\cD(\om)} f (k, x), \qquad x\in\R^N, \ k\in \Z, \om\in \Om. 
\end{equation}
here and below we will use notation $\bS(\fdot, \fdot, x)=: \bS f (x) \in \ell^2\otimes L^2(\Om, \bP)$.

Clearly, $\|\bS f(x)\|_{\ell^2\times L^2(\Om, \bP)} = \left[\E (| S_{\cD(\omega)} f (x) |^2 )\right]^{1/2}$, so 
$\|\bS f\|\ci{\cL} = \int_{\R^N} \left[\E (| S_{\cD(\omega)} f (x) |^2 )\right]^{1/2} dx$ (the norm in $\cL$ was constructed so this would hold). 

Similar ``vectorization'' can be performed to the square function $\wt S$. Namely, fix some ordering of the ``children'' of a dyadic cube (the same one for all cubes), and for each dyadic cube $Q$ define operator from the functions constant on the ``children'' of $Q$ to $\C^{2^N}$ by 
$$
U_Q\1_{Q_k} = |Q_k|^{1/2} e_k, \qquad k=1, 2, \ldots, 2^{2^N}, 
$$
where $Q_k$ are ``children'' of $Q$ and $\{e_k\}_{k=1}^{2^N}$ is the standard basis in $\R^{2^N}$. 
Let now $\cL:= L^1(\ell^2\otimes L^2(\Om, \bP)\otimes \C^{2^N})$ with the norm 
$$
\| f\|_{\cL} := \int_{\R^N} \left( \int_\Om \sum_{k\in\Z} \lnm f(k,\om, x)\rnm\ci{\C^{2^N}}^2 d\bP(\om) \right)^{1/2} dx. 
$$

Define the square function $\wt \bS$ by 
$$
\wt\bS f (k, \om, x) :=  U_Q \Delta_Q f \in \C^{2^N}, 
$$
where $Q\in \cD(\om)$ is the cube of size $2^k$ containing $x$. 
From the construction it is clear that $\|\wt\bS f\|\ci{\cL}  = \int_{\R^N} \left[\E (| \wt S_{\cD(\omega)} f (x) |^2 )\right]^{1/2} dx$.

Now the proof of Theorem \ref{t0.1} can be outlined in few sentences. First, it is nor hard to show that $\bS$ (or $\wt\bS$) is a \cz operator, whose kernel takes values in the Hilbert space $\ell^2\otimes L^2(\Om, \bP)$.   It is a well known fact that such \cz operators map $H^1$ to $L^1$, so
$$
\int_{\R^N} \left[ \E (| S_{\cD(\omega)} f (x) |^2 )\right]^{1/2} dx = \|\bS f\|\ci{\cL} \le C \|f\|_{H^1}. 
$$
As we discussed above in Section \ref{s0.1}, the opposite inequality $ \|f\|_{H^1} \le C \|\bS f\|\ci{\cL}$ is trivial.

Of course, the classical theory of \cz operators deals with the scalar-valued kernels. But, as it is well known to the specialists, all the facts that we need, are valid in the case of Hilbert space valued kernels too. 

However, the blind trust is not expected from the reader: all relevant facts will be presented below. 

\subsection{$\bS$ as a \cz operator}
\label{s1.1}
Let us recall that the classical \cz kernel on $\R^N$ is the function $K(\fdot, \fdot)$ defined on $\R^N\times \R^N\setminus\{(x, x):x\in\R^N\}$
satisfying  
\begin{enumerate}
\item $| K(x,y)|\le C |x-y|^{-N}$;
\item There exists $\delta>0$ such that
    $$
| K(x,y)-K(x_0,y)|,\ \lnm K(y,x)-K(y,x_0)\rnm \le
C  \frac{|x-x_0|^\delta}{|y-x_0|^{N+ \delta}}\ ,
$$
 whenever $|y-x_0|\ge2|x-x_0|$
\end{enumerate}

One can also consider operator-valued kernels, $K(x, y)\in B(X, Y)$ for arbitrary Banach spaces $X$ and $Y$. In this case $|\fdot |$ in the left hand side should be replaced by the norm in $B(\cX, \cY)$.

\subsubsection{Vector-valued \cz operators}
\label{s1.2.1}
A \cz operator with (op\-er\-a\-tor-valued) \cz kernel $K$ is a bounded operator $T:L^2(\R^N;\cX)\to \linebreak L^2(\R^N, \cY)$ such that for all compactly supported $f\in L^2(\R^N;\cX)$ and $g\in L^2(\R^N;\cY^*)$ with separated supports 
$$
\La Tf, g\Ra=\iint_{\R^N\times\R^N} \left\La K(x, y) f(y), g(y)\right\Ra dy dx . 
$$
Such operators with operator-valued kernels were considered, for example, in \cite{RdFra-OpValuedCZO-1986}, and it was proved there (see Theorem 1.2 in Ch.~3) that if $\cX$ and $\cY$ are Hilbert spaces then the operator acts from $H^1$ to $L^1$, $\| Tf\|_{L^1(\R^N, \cX)} \le C \| f \|_{H^1(\R^N, \cY)}$. 

In fact, in \cite{RdFra-OpValuedCZO-1986} a much more general situation was considered. The kernel was only assumed to satisfy a weaker version of condition \cond2, and there was no condition \cond1.  Moreover, it was assumed that $T$ was bounded in some $L^p$, $1<p<\infty$, not necessarily  $p=2$. 

Note, that condition \cond1 in some form is required for the proof of $T1$ and $Tb$ theorems, but if one assumes that $T$ is bounded in some $L^p$, $1<p<\infty$ the condition \cond2 alone is sufficient for the action from $H^1$ to $L^1$.

Also, in \cite{RdFra-OpValuedCZO-1986} the theorem was proved for the case when $\cX$ and $\cY$ are arbitrary Banach spaces, if one defines $H^1$ via atomic decomposition.


It is well known that for the case of Hilbert-space-valued functions all the definitions of $H^1$ (via atomic decompositions, via maximal function, via different square functions, via Riesz transforms) 
are equivalent%
\footnote{Unfortunately, the author cannot point to a paper where all such equivalences are proved; but following the proofs for the scalar-valued case, one can see that everything works for the case of Hilbert-space valued case as well.}%
, so one can use the result from \cite{RdFra-OpValuedCZO-1986} without worrying about what definition of $H^1$ is used. 

In this paper we are considering the case when $\cX=\C$ and $\cY=\cH= \ell^2\otimes L^2(\Om, \bP)$, so we can say   that $K$ takes values in the Hilbert space $\cH$. 

Operators with such kernels also  act naturally  from $L^2(\R^N; \cK)\to L^2(\R^N; \cH\otimes\cK)$, where $\cK$ is a Hilbert space, and we will need this interpretation later in Section \ref{s2}. Indeed, with each vector $h\in \cH$ we can associate an operator $\cK \ni f \mapsto f\otimes h \in \cK\otimes \cH$ (and the norm of this operator is $\|h\|$).  
So, if  $K$ is ah $\cH$-valued  \cz kernel, then a \cz operator on $L^2(\R^N;\cK)$ is a bounded operator $T:L^2(\R^N;\cK)\to L^2(\R^N; \cH\otimes \cK)$ such that
$$
\La Tf, g\Ra_{L^2(\R^N; \cH\otimes \cK)} =\iint_{\R^N\times\R^N} \left\La K(x, y)\otimes f(y), g(y)\right\Ra_{\cH\otimes \cK} dy dx . 
$$
for all compactly supported $f$ and $g$ with separated supports. 

\subsubsection{Why $\bS$ is a \cz operator?}
To find the kernel $K$ of $\bS$ we need to compute $\bS \delta_y$, where $\delta_y$ is the unit mass at $y\in \R^N$:
$$
K(x, y)  = \left\{ \Delta_k^{\cD(\om)} \delta_y (x) \right\}_{k\in \Z,\ \om\in \Om} \in \ell^2\otimes L^2(\Om, \bP) 
$$
To get that expression rigorously, one needs to approximate $\delta_y$ by appropriate bump functions; notice that $\Delta_k^{\cD_\om} \delta_y$ is well defined, i.e.~does not depend on the choice of approximating sequence. 

Notice that for a dyadic cube $Q$, $\Delta_Q \delta_y (x) \ne 0$ only if $x\in Q, y\in Q$. Therefore $\Delta_Q \delta_y (x) = 0$ if $\ell(Q) < |x-y|_\infty$, so 
$\Delta_k^{\cD(\om)} \delta_y (x) = 0$ if $2^k < |x-y|_\infty$. 

Noticing that $\| \Delta_k^{\cD_\om} \delta_y \|_\infty \le C 2^{-kN}$ and summing the geometric series we get 
the property \cond1 of \cz kernels. 

To show property \cond2  notice that  $\Delta_k^{\cD(\om)} \delta_y(x) =\Delta_k^{\cD(\om)} \delta_y(x_0)$ if all 3 points $x, x_0, y$ are in the same cube $Q\in \cD(\om)$, $\ell(Q)=2^k$  and the points $x$, $x_0$ are in the same ``child'' of $Q$. 

The probability that it fails for a given $k$ can be estimated above by $C |x-x_0| 2^{-k}$. Using the estimate $\| \Delta_k^{\cD_\om} \delta_y \|_\infty \le C 2^{-kN}$ we conclude that 
$$
\int_\Om |\Delta_k^{\cD(\om)} \delta_y(x) - \Delta_k^{\cD(\om)} \delta_y(x_0)|^2 d\bP(\om) \le
C 2^{-2kN} |x-x_0| 2^{-k} =C|x-x_0| 2^{-2kN-k}.
$$

Let $k_0$ be the maximal $k\in \Z$ such that $2^k <\min\{|x-y|_\infty, |x_0-y|_\infty\}$. Then $\Delta_k^{\cD_\om} \delta_y(x) =\Delta_k^{\cD_\om} \delta_y(x_0) =0$ for $k<k_0$, so 
\begin{align*}
\lnm K(x, y) - K(x_0, y) \rnm^2 \le C \sum_{k\ge k_0} 2^{-2kN-k} |x-x_0|  & \le C |x-x_0| 2^{-2k_0 N -k_0} 
\\
& \le C |x-x_0| |x_0-y|^{-2N -1}. 
\end{align*}
Interchanging $x$ and $y$ and repeating the above reasoning we also get that 
$$
\lnm K(y,x) - K(y, x_0) \rnm^2  \le C |x-x_0| |x_0-y|^{-2N -1 }. 
$$
This means condition \cond2 holds with $\delta=1/2$. 

The proof for $\wt\bS$ is absolutely the same. 

\subsection{A remark about conditions $\bS\1 =0$, $\bS^*\1 =0$} Material in this section is not needed for the proof of the main results. However, it might be of interest for specialists; one can use it to present a different proof of the main results, without employing the cited above in Section \ref{s1.2.1} result from \cite{RdFra-OpValuedCZO-1986} result about \cz operators with operator-valued kernels. 

Note that the operator $\bS$, introduced above satisfies the conditions $\bS \1=0$ and $\bS^* 1=0$ (more precisely, the second condition should read as $\bS^* \1 e =0$ for all $e\in \cH$), which are well known to everybody familiar with $T(1)$-theorem.  

If one formally plugs $1$ into $\bS$ or $\bS^*$, the result will be $0$. Of course, it is only a formal reasoning, for $1$ is not in the domain of $\bS$, but any reasonable interpretation of $\bS\1$ gives the same result. For example it is not hard to show that 
\begin{equation}
\label{1.2}
\bS \1\ci Q \to 0, \quad \bS^*\1\ci Q e \to 0 \qquad \text{as } \ell(Q) \to \infty
\end{equation}
uniformly on compact subsets, where cubes $Q$ are centered at $0$. 

It is also easy to see that $\wt\bS \1=0$, but unfortunately  $\wt\bS^*\1\ne 0$. However, it is easy to modify $\wt\bS$ to make $\wt\bS^*\1=0$. 

Namely, let $\f$ be a function on the cube $[0,1)^N$ taking values $\pm1$ and such that $\int_Q\f dx =0$ and let $\f\ci Q(x) = \f((x-x\ci Q)/\ell(Q))$, where $x\ci Q$ is the base of $Q$, i.e. the point in $Q$ with smallest coordinates. 

Define the square function $\wt \bS$ by 
$$
\wt\bS f (k, \om, x) := \f\ci Q(x) U_Q \Delta_Q f \in \C^{2^N}, 
$$
where $Q\in \cD(\om)$ is the cube of size $2^k$ containing $x$. 

The function $\f\ci Q$ in the definition of $\wt\bS$ is introduced to insure that $\wt\bS^*1 =0$. Now it is easy to show that $\wt\bS\1=0$, $\wt\bS^*\1=0$ (in the sense of \eqref{1.2}). 

\cz operators satisfying $T\1=0$ and $T^*\1=0$ map $H^1\to H^1$. To show that one, for example can consider matrix of such an operator in the wavelet basis. It was shown in \cite{Meyer-WavOper-1992} that under rather mild assumption about wavelet basis, the coefficient space of $H^1$ in this basis is the Triebel--Lizorkin space $\fb^{0,2}_1$; see Appendix (Section \ref{s3}) for the definition. In \cite{Meyer-WavOper-1992} the scalar-valued case was treated, but one can easily see that everything works for the Hilbert-space valued case, and one just get the vector-valued space $\fb^{0,2}_1$.  Moreover, in \cite{Hyto-wavelets_2006} the $H^1$ spaces with values in UMD Banach spaces were characterized in terms of coefficient in the wavelet basis; in the Hilbert-space-valued case the result gives exactly $\fb^{0,2}_1$ with values in the Hibert space.

Using the standard estimates with \cz kernels one can see that if a \cz operator $T$ (even with the operator-valued kernel) satisfies $T\1=0$, $T^*\1=0$, then its matrix in the wavelet basis is what is called in \cite{Frazier-Jawerth-1990} \emph{almost diagonal} for $\fb^{0,q}_p$, $1\le p, q<\infty$; cf. Section \ref{s3} below for the definition. 

And it was shown in \cite{Frazier-Jawerth-1990} that \emph{almost diagonal} operators are bounded on all $\fb^{0,q}_p$, $1\le p, q<\infty$. Since the almost diagonality is a condition on the magnitude of the entries, the result holds for vector-valued Triebel--Lizorkin spaces.  
Of course, instead of considering a wavelet basis, one can consider a frame decomposition, given by what is called in \cite{Frazier-Jawerth-1990} \emph{$\f$-transform}; all the estimates will be the same. 

\section{Proof for the multi-parameter case}
\label{s2}
Proof of the main result for multi-parameter case (Theorem \ref{t0.4}) follows the  lines of the proof for the one parameter. 

Without loss of generality we can assume that the probability space $\Om$ is represented as a product, $(\Om, \bP)= (\Om_1\times\Om_2\times\ldots\times \Om_n, \bP_1\times\bP_2\times\ldots \times\bP_n)$ and that the random dyadic grid $\cD_k(\om)$ on $X_k$ depends only on $\om_k$. 

For a dyadic lattice $\cD = \cD_1\times \cD_2\times\ldots \times \cD_n$ on $X_1\times X_2\times\ldots\times X_n$ define the vector-valued square function $\bS_\cD$ taking values in $\ell^2(\Z^n)$
$$
\bS_\cD f (\bk, x) := \Delta_\bk^\cD f (x), \qquad \bk\in \Z^n,\ x\in \R^N
$$

Consider the space $\cL= L^1(\ell^2(\Z^n)\otimes L^2(\Om, \bP))$, 
$$
\| f\|_{\cL} := \int_{\R^N} \left( \int_\Om \sum_{\bk\in\Z^n} | f(\bk,\om, x)|^2 d\bP(\om) \right)^{1/2} dx. 
$$
It is an $L^1$ space with values in the Hilbert space $\ell^2(\Z^n)\otimes L^2(\Om, \bP)$. Note, that this Hilbert space can be decomposed as $\ell^2(\Z^n)\otimes L^2(\Om, \bP) =(\ell^2\otimes L^2(\Om_1 ,\bP_1))\otimes 
(\ell^2\otimes L^2(\Om_2,\bP_2))\otimes \ldots\otimes (\ell^2\otimes L^2(\Om_n,\bP_n))$; here $\ell^2=\ell^2(\Z)$. 

Define the vector-valued square function, taking values in the space $\ell^2(\Z^n)\otimes L^2(\Om, \bP)$ by 
$$
\bS f(\bk, \om, x) = \bS_{\cD(\om)} f (\bk, x), \qquad x\in\R^N, \ \bk\in \Z^n, \om\in \Om. 
$$
Note, that $\bS$ can be decomposed as a tensor product $\bS= \bS_1 \otimes \bS_2 \otimes\ldots \otimes \bS_n$, where $\bS_k$ is the one parameter square function defined by \eqref{1.1} in variables $x_k \in X_k =\R^{N_k}$, $\om_k\in \Om_k$. 

Clearly, as in the one parameter case, we have for the multi-parameter square functions $\|\bS f\|_1 =\int_X \left[ \E_\om (|S_{\cD(\om)} f(x)|^2) \right]^{1/2}dx$. 

Similarly, for the square function $\wt S$ one can construct its vector version  $\wt\bS$ with values in $\cH =\cH_1\otimes \cH_2\otimes \ldots \otimes \cH_n$, $\cH_k = \ell^2 \otimes L^2 (\Om_k, \bP_k) \otimes \C^{2^{N_k}}$. Again, it is easy to see that $\|\wt\bS f\|_{L^1(X;\cH)} = \int_X \left[ \E_\om (|\wt S_{\cD(\om)} f(x)|^2) \right]^{1/2} dx$.

As it was already discussed in Section \ref{s1.1}, operators $\bS_k$ are (Hilbert-space-valued) one-parameter \cz operators, so $\bS$ is the tensor product of such operators. And it is probably immediately clear to experts, that such operators map $H^1(X_1\otimes X_2\otimes \ldots\otimes X_n)$ to $L^1$.  

One way to see that is to notice that $\bS$ is a trivial case of multi-parameter \cz operators, and according to Theorem 2.2 in \cite{Pipher-Duke-1986} such operators map $H^1(X_1\otimes X_2\otimes \ldots\otimes X_n)$ to $L^1$.   Of course, one needs to use a Hilbert space valued version of the theorem, but it is clear to the specialists, that the proof from \cite{Pipher-Duke-1986} works in this case. It is also clear that while Theorem 2.2 in \cite{Pipher-Duke-1986} is stated for $\R\times \R\times \ldots \times \R$, the proof works for $X_1\otimes X_2\otimes \ldots\otimes X_n$, $X_k=\R^{N_k}$. 

For the reader who is not well familiar with multi-parameter $H^1$ spaces we present below an alternative proof, which exploits the tensor product structure of $\bS= \bS_1 \otimes \bS_2 \otimes\ldots \otimes \bS_n$; it uses only theory of  one parameter  $H^1$-spaces. Of course, we will need the theory of $H^1$-spaces with values in a Hilbert space, but we need the vector valued theory in the above multi-parameter reasoning as well. 

There is one more reason for the presenting the one-parameter proof below: the reasoning above gives the estimate $\int_X \left[ \E_\om (|S_{\cD(\om)} f(x)|^2) \right]^{1/2}dx \le C \| f \|_{H^1(X)}$. The opposite estimate follows from the inequality $\|f\|_{H^1} \le \int_X |S_\cD f (x)| dx $, which is trivial in one-parameter case.  In multi-parameter case, the same estimate, while true and known to specialists, requires some some work to prove it. 
The one-parameter  approach presented below gives a reasonably simple proof of this estimate.

\subsection{Multi-parameter $H^1$-spaces}
\label{s2.1}
Recall, cf \cite[III.4.4]{Stein-book_1993} that for  for one-parameter Hardy space $H^1(\R^N)$ the norm $\|S\ut L f\|_1$, where $S\ut L$ is the Lusin square function (aka Lusin Area Integral)
$$
S\ut L f (x) = \int_{\Gamma_x} |\nabla f (y, t)|^2 t^{1-n} dydt
$$
(here $\Gamma_{x}  := \{(y, t): y\in\R^N, t\ge 0, |y-x|< t\}$,   and $f(y, t)$ is the harmonic extension of $f$ from $\R^N$ to $\R^{N+1}_+ := \R^N\times \R_+ = \{(x, t): x\in \R^N, t\in \R_+\}$) gives an equivalent norm.  

One can consider the \emph{vectorization} $\bS\ut L $ of $S\ut L$ as follows. Let $\Gamma = \Gamma_0$ and define 
$$
\bS\ut L f (x, y, t) = t^{(1-n)/2} \nabla f (x+y, t), \qquad x\in \R^N, (y, t)\in \Gamma. 
$$
By construction $\bS\ut L f (x, \fdot, \fdot)\in L^2(\Gamma)\otimes \C^2$ and $\|\bS\ut L f (x, \fdot, \fdot)\|_{L^2(\Gamma)\otimes \C^2} = S\ut L f(x)$, therefore $\|\bS\ut L f\|_{L^1(L^2(\Gamma))} = \|S\ut L f\|_1$.  

One can define multi-parameter square functions $\vec S\ut L$ and $\vec\bS\ut L$ by 
\begin{align*}
\vec S\ut L  f (x_1, x_2, \ldots, x_n) & := 
\\
&  \left[ \int_{\Gamma_{x_1}\times\Gamma_{x_2}\times\ldots\times \Gamma_{x_n}} |\nabla_1\nabla_2\ldots\nabla_n f (y_1, t_1,  y_2, t_2, \ldots, y_n , t_n)|^2 \times \right.
\\&\qquad \qquad \qquad \left. 
\vphantom{\int_{\Gamma_{x_1}\times\Gamma_{x_2}\times\ldots\times \Gamma_{x_n}}}
\times t_1^{1-N_1} t_2^{(1-N_2)} \ldots t_n^{1-N_n} dy_1dt_1 dy_2 dt_2 \ldots dy_n dt_n \right]^{1/2}; 
\end{align*}
here $f (y_1, t_1,  y_2, t_2, \ldots, y_n , t_n)$ is the harmonic in each variable $(y_k, t_k)$, $y_k\in \R^{N_k}$, $t_k\in \R_+$ extension of $f$ from $\R^{N_1}\times \R^{N_2} \times\ldots\times \R^{N_n}$ to $\R^{N_1+1}_+\times \R^{N_2+1}_+ \times\ldots\times \R^{N_n+1}_+$ and $\nabla_k $ is the gradient in the variable $(y_k, t_k)$.

Following \cite{Chang-Fef-prod-1985} we say that $f\in H^1(X)= H^1(X_1\otimes X_2\otimes \ldots \otimes X_n)$ if $\vec S\ut L f \in L^1(X)$ and $\|\vec S\ut L f\|_1$ defines one of the possible equivalent norms in $H^1(X)$.

We also define the vector-valued linear square function $\vec \bS\ut L $ as $\vec\bS f (x) = \bS_1\ut L \otimes \bS_2\ut L \otimes \ldots \otimes \bS_n\ut L f (x) \in \cH = \cH_1\otimes \cH_2\otimes\ldots \otimes \cH_n$, where $\bS_k\ut L$ it one-parameter square function defined above taken in the variable $x_k$, $\cH_k= L^2(\Gamma_k)\otimes \C^2 $ and $\Gamma_k$ is the cone in $\R^{N_k+1}_+$ with the vertex at $0$.  

Again, by the construction $ \| \vec\bS\ut L f (x)\|_{\cH} = |\vec S\ut L f(x)|$, so $\|\vec\bS\ut L f \|_{L^1(X;\cH)}$ gives the norm in $H^1(X)$. 

We will use $\vec\bS\ut L $ to define multi-parameter $H^1$ with values in a Hilbert space $\cK$; in this case $\vec\bS\ut L f(x) \in \cH\times \cK$. We introduced such spaces only for notational purposes, so while most of the theory of multi-parameter $H^1$-spaces can be transfered to the Hilbert-space-valued case, we do not need this. 

\subsection{Proof of estimate $\int_X \left[ \E_\om (|S_{\cD(\om)} f(x)|^2) \right]^{1/2}dx \le C \|f\|_{H^1(X)}$} 
\label{s2.2}
Consider a multi-param\-e\-ter square function $\wt\bS_2 \otimes \ldots \otimes \wt\bS_n$, where each $\wt \bS_k$, $2\le k\le n$ is either one-parameter $\bS\ut L$ or one-param\-e\-ter ``random'' square function $\bS$,   defined in \eqref{1.1}, taken in the variable $x_k$. 

Assume that the choice of $\wt\bS_k$ is fixed. For a scalar-valued $f$ the function $\wt\bS_2 \otimes \ldots \otimes \wt\bS_n f$ takes values in $\wt\cH^1:=\wt\cH_2\otimes\ldots \otimes \wt\cH_n$, where each $\wt\cH_k$ is either $L^2(\Gamma_k)\otimes\C^2$ or $\ell^2\otimes L^2(\Om_k, \bP_k)$, depending on what square function $\wt\bS_k$ is. 

Let $\bS_k\ut L$, $\bS_k$ be the Lusin and ``random'' square functions, taken in the variable $x_k$, and let $\cH_k:= L^2(\Gamma_k)\otimes \C^2$ and $\cH_k':= \ell^2\otimes L^2(\Om_k, \bP_k)$ be the corresponding target spaces. 

\begin{lm}
\label{l2.1}
$$
\int_X \| \bS_1\otimes \wt\bS_2 \otimes \ldots \otimes \wt\bS_n f(x)\|\ci{\cH_1' \otimes \wt\cH^1} dx 
\le C 
\int_X \| \bS_1\ut L \otimes \wt\bS_2 \otimes \ldots \otimes \wt\bS_n f(x)\|\ci{\cH_1 \otimes \wt\cH^1} dx
$$
\end{lm}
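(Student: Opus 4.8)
The plan is to reduce the claimed inequality to a purely one-parameter statement in the variable $x_1$, absorbing the remaining factors into a Hilbert-space-valued function. Since $\bS_1$ and $\bS_1\ut L$ act only in $x_1$, while each $\wt\bS_k$ ($2\le k\le n$) acts only in $x_k$, all of these operators commute; applying the $x_1$-factor last, I would write
\[
\bS_1\otimes\wt\bS_2\otimes\cdots\otimes\wt\bS_n=(\bS_1\otimes\mathrm{Id})\circ(\mathrm{Id}\otimes\wt\bS_2\otimes\cdots\otimes\wt\bS_n),
\]
and similarly with $\bS_1\ut L$ in place of $\bS_1$. Setting $g:=(\mathrm{Id}\otimes\wt\bS_2\otimes\cdots\otimes\wt\bS_n)f$, a function on $X$ with values in $\wt\cH^1=\wt\cH_2\otimes\cdots\otimes\wt\cH_n$, both integrands become one-parameter square functions applied to $g$ in $x_1$, the only difference being which operator sits in the first slot.

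Next I would freeze the remaining variables. Write $y=(x_2,\ldots,x_n)$, $Y=X_2\times\cdots\times X_n$, and regard $g=g(x_1,y)$ as, for almost every fixed $y$, a function $g(\fdot,y)\colon X_1\to\wt\cH^1$. By Tonelli's theorem the left-hand side of the lemma equals
\[
\int_Y\left(\int_{X_1}\|(\bS_1 g(\fdot,y))(x_1)\|_{\cH_1'\otimes\wt\cH^1}\,dx_1\right)dy,
\]
and the right-hand side is the same expression with $\bS_1\ut L$ and $\cH_1$ in place of $\bS_1$ and $\cH_1'$. Hence it suffices to establish, with a constant $C$ independent of $y$, the one-parameter inequality
\[
\|\bS_1 h\|_{L^1(X_1;\cH_1'\otimes\wt\cH^1)}\le C\,\|\bS_1\ut L h\|_{L^1(X_1;\cH_1\otimes\wt\cH^1)}
\]
for every $\wt\cH^1$-valued $h$, and then apply it to $h=g(\fdot,y)$ and integrate in $y$.

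This last inequality is precisely the one-parameter Hilbert-space-valued theory applied in $x_1$. As explained in Section~\ref{s1.1}, $\bS_1$ is a one-parameter \cz operator with $\cH_1'$-valued kernel; by the remark closing Section~\ref{s1.2.1} it acts naturally from $L^2(X_1;\wt\cH^1)$ to $L^2(X_1;\cH_1'\otimes\wt\cH^1)$, and the \cz theory of \cite{RdFra-OpValuedCZO-1986} gives $\|\bS_1 h\|_{L^1(X_1;\cH_1'\otimes\wt\cH^1)}\le C\|h\|_{H^1(X_1;\wt\cH^1)}$, with $C$ depending only on the kernel and $N_1$, hence independent of $y$. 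On the other hand, by the Lusin-area-integral characterization of the (Hilbert-space-valued) space $H^1(X_1;\wt\cH^1)$ recalled in Section~\ref{s2.1}, one has $\|h\|_{H^1(X_1;\wt\cH^1)}\le C\|\bS_1\ut L h\|_{L^1(X_1;\cH_1\otimes\wt\cH^1)}$. Chaining these two bounds yields the displayed one-parameter inequality, and integrating in $y$ proves the lemma.

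I expect the only point needing genuine care to be that both one-parameter ingredients — the $H^1\to L^1$ boundedness of the \cz operator $\bS_1$ and the Lusin characterization of $H^1$ — must hold for functions valued in the Hilbert space $\wt\cH^1$, with constants independent of $\wt\cH^1$ and of the frozen variables $y$. This is exactly the vector-valued $H^1$ machinery already invoked in Sections~\ref{s1.2.1} and~\ref{s2.1}: since $\wt\cH^1$ is a Hilbert space, the scalar arguments transfer with unchanged constants, so the uniformity in $y$ is automatic. The measurability of $y\mapsto g(\fdot,y)$ and the legitimacy of Tonelli's theorem are routine and raise no real difficulty.
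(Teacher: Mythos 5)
Your proposal is correct and takes essentially the same approach as the paper: you freeze $x^1=(x_2,\ldots,x_n)$, identify the right-hand side with $\int_{X^1}\|\wt\bS_2\otimes\cdots\otimes\wt\bS_n f(\fdot,x^1)\|_{H^1(X_1;\wt\cH^1)}\,dx^1$ via the Lusin-square-function definition of the Hilbert-space-valued $H^1$-norm, apply the vector-valued \cz bound $H^1\to L^1$ for $\bS_1$ at each frozen $x^1$, and integrate. This matches the paper's proof of Lemma~\ref{l2.1} step for step, including the appeal to \cite{RdFra-OpValuedCZO-1986} and to the $L^2(X_1;\cK)\to L^2(X_1;\cH\otimes\cK)$ interpretation of operators with Hilbert-space-valued kernels.
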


Since  the tensor products of square functions we consider does not depend on the order (the square functions, taken in different variables obviously commute), the above lemma tells us that one can replace a factor  $\bS_k\ut L$ by  $\bS_k$ in $\wt\bS_1 \otimes \wt\bS_2 \otimes \ldots \otimes \wt\bS_n f$ and increase the norm by at most the factor $C$. 

Starting with $\bS_1\ut L \otimes \bS_2\ut L \otimes \ldots \otimes \bS_n\ut L  f$ and applying  Lemma \ref{l2.1} successively to each factor, we get 
$$
\int_X \| \bS_1\otimes \bS_2 \otimes \ldots \otimes \bS_n f(x)\|\ci{\cH'} dx 
\le C 
\int_X \| \bS_1\ut L \otimes \bS_2\ut L  \otimes \ldots \otimes \bS_n\ut L  f(x)\|\ci{\cH} dx, 
$$
which is exactly the desired estimate (here $\cH' = \cH_1'\otimes \cH_2' \otimes \ldots\otimes \cH_n'$ and $\cH = \cH_1\otimes \cH_2 \otimes \ldots\otimes \cH_n$).

\begin{proof}[Proof of Lemma 
\ref{l2.1}]
Let us introduce  notation $x=(x_1, x^1)\in X=X_1\times X^1$, where $x^1 = (x_2, x_3, \ldots, x_n) \in X^1 := X_2\times\ldots\times X_n$. 

Consider the vector-valued function
$$
\bS_1\ut L \otimes \wt \bS_2 \otimes \ldots \otimes \wt\bS_n  f = \bS_1\ut L \otimes( \wt\bS_2 \otimes \ldots \otimes \wt\bS_n ) f. 
$$ 
If
$$ 
\int_X \| \bS_1\ut L \otimes( \wt\bS_2 \otimes \ldots \otimes \wt\bS_n ) f(x)\|\ci{\cH_1'\otimes \wt\cH^1} dx
<\infty, 
$$
we conclude that for almost all $x^1$
\begin{equation}
\label{2.1}
\wt\bS_2 \otimes \ldots \otimes \wt\bS_n  f(\fdot, x^1) \in H^1(X_1; \wt\cH^1), 
\end{equation}
and
\begin{align}
\label{2.2}
\int_X \| \bS_1\ut L \otimes( \wt\bS_2 \otimes \ldots 
& 
\otimes \wt\bS_n ) f(x)\|\ci{\cH_1\otimes \wt\cH^1} dx 
\\ & \notag
=  
\int_{X^1} \| \wt\bS_2 \otimes \ldots \otimes \wt\bS_n  f(\fdot, x^1) \|\ci{ H^1(X_1;\wt\cH^1)} dx^1.
\end{align}
Note, that we have in \eqref{2.1} the usual one-parameter $H^1$-space (although vector-valued). 

As we discussed above in Section \ref{s1.1}, $\bS_1$ is a vector-valued \cz operator, so it maps one-parameter $H^1$ to $L^1$ (even in the Hilbert-space-valued case), so for almost all $x^1$ we have
\begin{align}
\label{2.3}
\int_{X_1} \| \bS_1 \otimes( \wt\bS_2 \otimes \ldots &\otimes \wt\bS_n ) f(x_1, x^1)\|\ci{\cH_1'\otimes \wt\cH^1} dx_1 
\\  \notag
& \le C
\|  \wt\bS_2 \otimes \ldots \otimes \wt\bS_n  f(\fdot, x^1) \|\ci{ H^1(X_1; \wt\cH^1) } 
\\  \notag
& =C \int_{X_1} \| \bS_1\ut L \otimes( \wt\bS_2 \otimes \ldots \otimes \wt\bS_n ) f(x_1, x^1)\|\ci{\cH_1\otimes \wt\cH^1} dx_1
\end{align}
Integrating over $X^1$ and taking into account \eqref{2.2}, we get the conclusion of the lemma. 
\end{proof}

\subsection{Estimate $\|f\|\ci{H^1(X)} \le C \int_X |S_\cD f(x)| dx$}
\label{s2.3} 
Proof of this estimate follows the lines of Section \ref{s2.2} almost word by word. It is based on the following analogue of Lemma \ref{l2.1}, which allows us replace one-parameter Lusin square functions by the dyadic ones. 

Consider again a multi-param\-e\-ter square function $\wt\bS_2 \otimes \ldots \otimes \wt\bS_n$, where now each $\wt \bS_k$, $2\le k\le n$ is either one-parameter $\bS\ut L$ or one-param\-e\-ter dyadic square function $\bS_{\cD_k}$,   defined in \eqref{1.1}, taken in the variable $x_k$. We assume here that in each $X_k$ dyadic lattices $\cD_k$ are fixed. 

We will use the same notation as in Section \ref{s2.2}, with the only exception that now $\cH_k'=\ell^2$

\begin{lm}
\label{l2.2}
$$
\int_X \| \bS_1\ut L \otimes \wt\bS_2 \otimes \ldots \otimes \wt\bS_n f(x)\|\ci{\cH_1 \otimes \wt\cH^1} dx
\le C 
\int_X \| \bS_{\cD_1}\otimes \wt\bS_2 \otimes \ldots \otimes \wt\bS_n f(x)\|\ci{\cH_1' \otimes \wt\cH^1} dx .
$$
\end{lm}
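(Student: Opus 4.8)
The plan is to mirror the proof of Lemma \ref{l2.1}, but with the two one-parameter square functions exchanging roles. In Lemma \ref{l2.1} the Lusin factor sat on the dominating (right) side and the random factor on the left, so the one-parameter input was the \cz mapping property $H^1\to L^1$; here the Lusin factor is on the \emph{left} and the fixed dyadic factor on the dominating (right) side, so the one-parameter input I will use instead is the \emph{trivial} inclusion $H^1_{\cD_1}\subset H^1$. In particular I cannot, and need not, treat $\bS_{\cD_1}$ as a \cz operator: a \emph{fixed} dyadic square function is not one, which is precisely what the randomization in Section \ref{s1} was designed to repair.

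First I would set up the slicing exactly as in Lemma \ref{l2.1}: write $x=(x_1,x^1)\in X=X_1\times X^1$ with $x^1=(x_2,\ldots,x_n)\in X^1:=X_2\times\ldots\times X_n$, and put $G:=\wt\bS_2\otimes\ldots\otimes\wt\bS_n f$, a function on $X$ with values in $\wt\cH^1$. Since square functions taken in different variables commute, for fixed $x^1$ both $\bS_1\ut L$ and $\bS_{\cD_1}$ act only in the $x_1$-variable of the slice $G(\fdot,x^1)$. Assuming the right-hand side finite (otherwise there is nothing to prove), I would reproduce the identity \eqref{2.2}: by Tonelli and the (Hilbert-space-valued) Lusin-area characterization of one-parameter $H^1$,
$$
\int_X \|\bS_1\ut L\otimes\wt\bS_2\otimes\ldots\otimes\wt\bS_n f(x)\|\ci{\cH_1\otimes\wt\cH^1}\,dx=\int_{X^1}\|G(\fdot,x^1)\|\ci{H^1(X_1;\wt\cH^1)}\,dx^1,
$$
while directly from the definition of the vector-valued dyadic Hardy space,
$$
\int_X \|\bS_{\cD_1}\otimes\wt\bS_2\otimes\ldots\otimes\wt\bS_n f(x)\|\ci{\cH_1'\otimes\wt\cH^1}\,dx=\int_{X^1}\|G(\fdot,x^1)\|\ci{H^1_{\cD_1}(X_1;\wt\cH^1)}\,dx^1.
$$
Finiteness of the right-hand side then yields $G(\fdot,x^1)\in H^1_{\cD_1}(X_1;\wt\cH^1)$ for almost every $x^1$.

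With both sides now written as integrals over $X^1$ of one-parameter norms of the same slice, the remaining step is the pointwise-in-$x^1$ comparison $\|g\|\ci{H^1(X_1;\wt\cH^1)}\le C\|g\|\ci{H^1_{\cD_1}(X_1;\wt\cH^1)}$ with $g=G(\fdot,x^1)$. I would obtain this from the one-parameter inclusion $H^1_{\cD_1}\subset H^1$ recalled in Section \ref{s0.1}: it follows from $H^1$--BMO duality together with the obvious inclusion $\text{BMO}\subset\text{BMO}_{\cD_1}$ (equivalently, from the observation that a dyadic atom is a bounded multiple of a classical atom). Applying this slicewise and integrating over $X^1$ finishes the argument.

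The hard part will not be any estimate but the justification that this one-parameter inclusion holds in the \emph{Hilbert-space-valued} setting, with a constant $C=C(N_1)$. Both standard proofs (duality and atomic decomposition) use nothing beyond the pairing and the definition of atoms, and so carry over verbatim to $\wt\cH^1$-valued functions; I would simply remark on this rather than rederive it. The only other bookkeeping is checking that the Tonelli interchange and the slicewise identifications are legitimate, but this is identical to the corresponding step in Lemma \ref{l2.1}.
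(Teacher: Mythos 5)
Your proposal is correct and follows essentially the same route as the paper: slice via the identities \eqref{2.2} and \eqref{2.4}, then apply the Hilbert-space-valued one-parameter inclusion $H^1_{\cD_1}\subset H^1$ (obtained from $H^1$--BMO duality and $\text{BMO}\subset\text{BMO}_{\cD_1}$) slicewise and integrate over $X^1$. Your observation that the fixed dyadic square function $\bS_{\cD_1}$ is not a \cz operator, so the roles of the two sides are genuinely different from Lemma \ref{l2.1}, is exactly the point of the paper's argument as well.
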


Applying  Lemma \ref{l2.1} successively to each variable, as we did in Section \ref{s2.2}, we get 
$$
\int_X \| \bS_1\ut L \otimes \bS_2\ut L  \otimes \ldots \otimes \bS_n\ut L  f(x)\|\ci{\cH} dx
\le C 
\int_X \| \bS_{\cD_1}\otimes \bS_{\cD_2} \otimes \ldots \otimes \bS_{\cD_n} f(x)\|\ci{\cH'} dx , 
$$
which is exactly what we need.

\begin{proof}[Proof of Lemma \ref{l2.2}]
Similarly to \eqref{2.2} we get 
\begin{align}
\label{2.4}
\int_X \| \bS_{\cD_1} \otimes( \wt\bS_2 \otimes \ldots 
& 
\otimes \wt\bS_n ) f(x)\|\ci{\cH_1\otimes \wt\cH^1} dx 
\\ & \notag
=  
\int_{X^1} \| \wt\bS_2 \otimes \ldots \otimes \wt\bS_n  f(\fdot, x^1) \|\ci{ H^1_{\cD_1} (X_1;\wt\cH^1)} dx^1.
\end{align}

We will now use the fact that for Hilbert-space-valued functions $\|\f\|_{H^1} \le C \|\f\|_{H^1_\cD}$. Again, as in the scalar-valued case, it follows from 
 $H^1$-BMO duality ($(H^1)^*=\text{BMO}$, $(H^1_\cD)^*=\text{BMO}_\cD$) and the trivial inclusion $\text{BMO}\subset\text{BMO}_\cD$, which  imply the inclusion $H^1_\cD \subset H^1$ with the desired estimates of the norms.  
 
Using this inequality we get that for almost all $x^1$
$$
\| \wt\bS_2 \otimes \ldots \otimes \wt\bS_n  f(\fdot, x^1) \|\ci{ H^1 (X_1;\wt\cH^1)}
\le C
\| \wt\bS_2 \otimes \ldots \otimes \wt\bS_n  f(\fdot, x^1) \|\ci{ H^1\ti d (X_1;\wt\cH^1)}.
$$
Integrating over $X^1$ and using \eqref{2.2}, \eqref{2.4} we get the conclusion of the lemma. 
\end{proof}

\section{Appendix: some facts about $H^1$ and BMO spaces.}
\label{s3}
\subsection{Hilbert-space-valued  BMO spaces}
\label{s3.1}
Let us recall that a function on $X=\R^N$ with values in a Hilbert space $\cH$ belongs to the space $\text{BMO}=\text{BMO}(X, \cH)$ if
\begin{equation}
\label{3.1}
\|f\|\ti{BMO} := \sup_{Q} \int_Q \|f(x)-f\ci Q\|\ci\cH dx <\infty;
\end{equation}
here $f\ci Q := |Q|^{-1} \int_Q f(x) dx$ and the supremum is taken over all cubes $Q\subset \R^N$. 

If we fix a dyadic lattice $\cD$ and take the supremum in\eqref{3.1} only over \emph{dyadic} cubes $Q\in\cD$, we get the dyadic  space $\text{BMO}_\cD$ associated with this lattice. 

It is well known that $(H^1(\R^N;\cH))^* = \text{BMO}(\R^N;\cH)$ and $(H^1_\cD(\R^N;\cH))^* = \text{BMO}_\cD(\R^N;\cH)$; any standard proof of $H^1$-BMO duality would work for the Hilbert-space-valued functions. 

\subsection{Triebel--Lizorkin spaces  $\fb^{\alpha, q}_p$ and equivalence of $\| S f\|_1$ and $\|\wt S f\|_1$}
In this section we fix a dyadic lattice $\cD$, for example take for $\cD$ the standard dyadic lattice. 
\subsubsection{Spaces $\fb^{\alpha, q}_p$}
Following \cite{Frazier-Jawerth-1990} define the  space $\fb^{\alpha, q}_p$, ($\alpha\in\R$, $1\le p, p<\infty$,  consisting of sequences $s=\{s\ci Q\}\ci{Q\in\cD}$ such that 
$$
\|s\|_{\fb^{\alpha, q}_p} := \left\| \left( \sum_{Q\in \cD} (|Q|^{-\alpha /n} |s\ci Q| \cdot |Q|^{-1/2}\1_Q )^q \right)^{1/q} \right\|_{L^p}<\infty
$$
For $p=\infty$ the norm is defined using BMO-like norm
$$
\|s\|_{\fb^{\alpha, q}_\infty} := \sup_{P\in\cD} \left( \frac{1}{|P|} \int_P \sum_{Q\in \cD, \, Q\subset P} 
(|Q|^{-\alpha /n} |s\ci Q| \cdot |Q|^{-1/2}\1_Q )^q \right)^{1/q}
$$

We are interested in the case when the smoothness parameter $\alpha=0$; to simplify the notation in this case  we will use $\fb^q_p:= \fb^{0, q}_p$. 

We will need the following facts about duality for spaces $\fb^{\alpha, q}_p$:
$$
(\fb^{\alpha, q}_p)^* = \fb^{-\alpha, q'}_{p'}, \qquad 1\le p , q<\infty;
$$
here $1/p+1/p'=1$, $1/q+1/q'=1$.

\subsubsection{Almost diagonal operators}
Following \cite{Frazier-Jawerth-1990} we say that an operator $A$ with matrix \linebreak
$\{a\ci{Q, P}\}\ci{Q, P\in \cD}$ is \emph{almost diagonal} (for spaces $\fb^q_p = \fb^{0, q}_p$) if there exists $\e>0$ and $C<\infty$, such that 
$$
|a\ci{Q,P}| \le C \left( 1+ \frac{|x\ci P - x\ci Q|}{\max\{ \ell(P), \ell(Q)\} } \right)^{-N-\e} \times \ \min
\left[ 
\left(\frac{\ell(Q)}{\ell(P)} \right)^{(N+\e)/2}, 
\left(\frac{\ell(P)}{\ell(Q)} \right)^{(N+\e)/2}   
\right]
$$
(the definition is a bit more complicated for $\fb^{s, q}_p$ with $s\ne 0$)

It was shown it \cite{Frazier-Jawerth-1990} that an almost diagonal operator is bounded in all $\fb^q_p$ spaces, $1\le, q, p <\infty$. 

\subsubsection{Equivalence of $\| S f\|_1$ and $\|\wt S f\|_1$}
From the above result one can easily obtain the equivalence of $\| S f\|_1$ and $\|\wt S f\|_1$. First, since $\max_Q |\Delta_Q f |^2 \le 2^N E_Q(|\Delta_Q f |^2)$ we have pointwise estimate $S f(x) \le 2^{N/2} \wt S f(x)$ and so $\| S f \|_1 \le 2^{N/2} \| \wt S f \|_1$. 

To get the estimate $\|\wt S f \|_1\le C\|Sf\|_1$ let us express the conditions $Sf\in L^1$, $\wt Sf\in L^1$ in terms of Triebel-Lizorkin space $\fb_1^2$. Namely, with each function $f\in L^1\ti{loc}$ let us associate 2 sequences $a=\{a\ci Q\}\ci{Q\in\cD}$ and $b=\{b\ci Q\}\ci{Q\in\cD}$
$$
a\ci Q = \left[ \left( E\ci Q |\Delta\ci Q f|^2 \right)(x) \right]^{1/2}, \qquad b\ci Q = \left(\Delta\ci R f \right)(x), 
$$
where $R$ is the ``parent'' of $Q$ and $x$ is an arbitrary point in $Q$ (the result does not depend on $x$). 
Then clearly
$$
\| \wt S f \|_1 = \| a \|_{\fb_1^2}, \qquad \|  S f \|_1 = \| b \|_{\fb_1^2}.
$$
Note that 
$$
a\ci R = \left( 2^{-N} \sum_{Q\text{ is child of } R} | b\ci Q|^2 \right)^{1/2} \le 2^{-N/2} \sum_{Q\text{ is child of } R} | b\ci Q| =: T|b|, 
$$
where $|b|:=\{ | b\ci Q | \}\ci{Q\in\cD}$ and for $s=\{s\ci Q\}\ci{Q\in\cD}$
$$
(Ts)\ci R := 2^{-N/2} \sum_{Q\text{ is child of } R} s\ci Q. 
$$
The operator $T$ is almost diagonal (it has only finitely many ``diagonals''), so
$$
\| \wt S f \|_1 = \| a \|_{\fb_1^2} \le \| T|b|\|_{\fb_1^2} \le C \| \,|b|\,\|_{\fb_1^2} = 
C \| b \|_{\fb_1^2} =  C\|  S f \|_1. 
$$
\ \hfill\qed

\def\cprime{$'$}
\providecommand{\bysame}{\leavevmode\hbox to3em{\hrulefill}\thinspace}


\begin{thebibliography}{10}

\bibitem{Chang-Fef-prod-1985}
Sun-Yung~A. Chang and Robert Fefferman, \emph{Some recent developments in
  {F}ourier analysis and {$H\sp p$}-theory on product domains}, Bull. Amer.
  Math. Soc. (N.S.) \textbf{12} (1985), no.~1, 1--43.
  
  
\bibitem{Davis-Rearrangements-1980}
Burgess Davis, \emph{Hardy spaces and rearrangements}, Trans. Amer. Math. Soc.
  \textbf{261} (1980), no.~1, 211--233. 
  
  
\bibitem{Frazier-Jawerth-1990}
Michael Frazier and Bj{\"o}rn Jawerth, \emph{A discrete transform and
  decompositions of distribution spaces}, J. Funct. Anal. \textbf{93} (1990),
  no.~1, 34--170. 
  
  
\bibitem{Garn-Jones-BMO-dyadic-1982}
John~B. Garnett and Peter~W. Jones, \emph{B{MO} from dyadic {BMO}}, Pacific J.
  Math. \textbf{99} (1982), no.~2, 351--371. 

\bibitem{Hyto-wavelets_2006}
Tuomas Hyt{\"o}nen, \emph{Vector-valued wavelets and the {H}ardy space {$H\sp
  1(\mathbb{R}\sp n,X)$}}, Studia Math. \textbf{172} (2006), no.~2, 125--147.
  

\bibitem{Meyer-WavOper-1992}
Yves Meyer, \emph{Wavelets and operators}, Cambridge Studies in Advanced
  Mathematics, vol.~37, Cambridge University Press, Cambridge, 1992, Translated
  from the 1990 French original by D. H. Salinger.
  
  
 
\bibitem{Pipher-Duke-1986}
Jill Pipher, \emph{Journ\'e's covering lemma and its extension to higher
  dimensions}, Duke Math. J. \textbf{53} (1986), no.~3, 683--690.
  
\bibitem{Pipher-Ward-2008}
Jill Pipher and Lesley~A. Ward, \emph{B{MO} from dyadic {BMO} on the bidisc},
  J. Lond. Math. Soc. (2) \textbf{77} (2008), no.~2, 524--544.
  
  
\bibitem{RdFra-OpValuedCZO-1986}
Jos{\'e}~L. Rubio~de Francia, Francisco~J. Ruiz, and Jos{\'e}~L. Torrea,
  \emph{Calder\'on-{Z}ygmund theory for operator-valued kernels}, Adv. in Math.
  \textbf{62} (1986), no.~1, 7--48.
  
\bibitem{Stein-book_1993}
Elias~M. Stein, \emph{Harmonic analysis: real-variable methods, orthogonality,
  and oscillatory integrals}, Princeton Mathematical Series, vol.~43, Princeton
  University Press, Princeton, NJ, 1993, With the assistance of Timothy S.
  Murphy, Monographs in Harmonic Analysis, III.
  
  
  
  
\end{thebibliography}
\end{document}